\documentclass[12pt]{amsart}

\usepackage{amsthm, amsfonts, amssymb, xypic}
\usepackage[all]{xy}
\usepackage{amscd}
\usepackage[pagebackref,colorlinks]{hyperref}

\theoremstyle{definition}
\newtheorem{ntn}{Notation}[section]

\theoremstyle{plain}
\newtheorem{lem}[ntn]{Lemma}

\newtheorem{prp}[ntn]{Proposition}
\newtheorem{thm}[ntn]{Theorem}
\newtheorem{cor}[ntn]{Corollary}

\theoremstyle{remark}

\newtheorem{rem}[ntn]{Remark}
\newtheorem{exa}[ntn]{Example}

\newcommand{\z}{\mathbb{Z}}

\newcommand{\R}{\mathbb{R}}

\newcommand{\lan}{\langle}
\newcommand{\ran}{\rangle}

\newcommand{\GL}{\mathit{{\rm GL}}}

\newcommand{\SL}{\mathit{{\rm SL}}}

\newcommand{\inc}{{\rm inc}}
\newcommand{\id}{{\rm id}}

\newcommand{\tors}{{{\rm Tor}_1^{\z}}}

\newcommand{\si}{\sigma}

\newcommand{\arr}{\rightarrow}
\newcommand{\larr}{\longrightarrow}

\newcommand{\mt}{\mapsto}

\newcommand{\fff}{{R^\ast}}

\newcommand{\rr}{{R^{\ast}}}

\newcommand{\diag}{{\rm diag}}
\renewcommand{\ker}{{\rm ker}}
\newcommand{\coker}{{\rm coker}}
\newcommand{\im}{{\rm im}}
\newcommand{\ind}{{\rm ind}}

\newtheoremstyle{athm}
  {}
  {}
  {\itshape}
  {}
  {\scshape}
  {}
  {.5em}
  {\thmnote{#3}}
\theoremstyle{athm}

\begin{document}

\title[Third homology of general linear groups]
{Third homology of general linear groups over rings with many units}
\author{Behrooz Mirzaii}
\begin{abstract}
For a commutative ring $R$ with many units, we describe the kernel of
$H_3(\inc): H_3(\GL_2(R), \z) \arr H_3(\GL_3(R), \z)$.
Moreover we show that the elements of this kernel are of order at
most two. As an application we study the indecomposable part of $K_3(R)$.
\end{abstract}
\maketitle

\section*{Introduction}

Interest in the study of the homology of general linear groups
has arose  mostly because of their close connection with the $K$-theory of rings.
For any ring $R$ and any positive integer $n$, there are natural homomorphisms
\[
\begin{array}{ccc}
K_n(R) \!\!\!& \overset{h_n}{-\!\!\!-\!\!\!-\!\!\!-\!\!\!-\!\!\!-\!\!\!-\!\!\!\larr}
& \!\!\!\!\! H_n(\GL(R), \z)\\
h_n'\searrow\!\!\!\!\!\!\!\!\!\!\!\!\!\!\!\!\! & &
\!\!\!\!\!\!\!\hspace{-2cm}\swarrow\!\!\!\nearrow\\
& H_n(E(R), \z), &
 \end{array}
\]
where $E(R)$ is the elementary subgroup of the stable general linear group $\GL(R)$
and $h_n$ and $h_n'$ ($n \ge 2$ for $h_n'$) are
the Hurewicz maps coming from algebraic topology
\cite[Chap. 2]{srinivas1996}.

It is known that $K_1(R)\overset{h_1}{\simeq} H_1(\GL(R), \z)$,
$K_2(R)\overset{h_2'}{\simeq} H_2(E(R), \z)$
\cite[Chap. 2]{srinivas1996}. The homomorphism $h_3':K_3(R) \arr H_3(E(R), \z)$
is surjective with 2-torsion kernel
\cite[Corollary 5.2]{suslin1991}, \cite[Proposition 2.5]{sah1989}.

Homological stability type theorems, are very powerful tools for the study
of $K$-theory of rings. Suslin has proved that for an infinite field $F$,
we have the homological stability
\[
H_n(\GL_n(F), \z) \overset{\sim}{\larr}  H_n(\GL_{n+1}(F), \z) \overset{\sim}{\larr}
H_n(\GL_{n+1}(F), \z) \overset{\sim}{\larr} \cdots,
\]
and used this to prove many interesting results \cite{suslin1985}.
For example he showed that we have an exact sequence
\[
\begin{CD}
H_n(\GL_{n-1}(F), \z)
@>{H_n(\inc)}>>
H_n(\GL_n(F), \z) \larr
K_n^M(F) \larr 0.
\end{CD}
\]
Suslin has conjectured that the kernel of
\[
H_n(\GL_{n-1}(F), \z) \arr  H_n(\GL_n(F), \z)
\]
is a torsion  group \cite[Problem 4.13]{sah1989}.
These results can be generalized over rings with many units \cite{guin1989}, e.g.
semilocal rings with infinite residue fields. Also Suslin's conjecture can be asked
in this more general setting \cite{mirzaii2008}. A positive answer to this conjecture only is
known for $n \le 4$ \cite{elbaz1998}, \cite{mirzaii-2008}, \cite{mirzaii2008}.

It was known that when $F$ is an infinite field, the kernel of the homomorphism
$H_3(\GL_2(F), \z) \arr H_3(\GL_3(F), \z)$ is a 2-power torsion group \cite{mirzaii-2008}.
In this article we generalize this to all commutative rings with many units.
In fact we do more. Here we describe the kernel of
\[
H_3(\inc): H_3(\GL_2(R), \z) \arr H_3(\GL_3(R), \z),
\]
where $R$ is a commutative ring with many units.
Our main theorem claims that the  elements of $\ker(H_3(\inc))$ are of the form
\[
{\begin{array}{c} \!\!\sum \!\!\end{array}}
{\rm \bf{c}} (\diag(a,1), \diag(1,b),\diag(c, c^{-1}))
\]
provided that
\[
{\begin{array}{c} \!\!\sum \!\!\end{array}}
a\otimes \{b, c\}+b\otimes \{a, c\}=0 \in \fff \otimes_\z K_2^M(R).
\]
Moreover by an easy argument we will show that $\ker(H_3(\inc))$ is a $2$-torsion
group. It is highly expected that this kernel should be trivial, at least when $R$ 
is a field \cite[Section 5]{hutchinson-tao2009}.

It is known that, the map $H_3(\inc)$ is closely related to  the indecomposable part
of $K_3(R)$, i.e. $K_3(R)^\ind:=K_3(R)/K_3^M(R)$ \cite{mirzaii-2008},
\cite{hutchinson-tao2009}. As an application of our main theorem we show that
\[
K_3(R)^\ind \otimes_\z \z[1/2] \simeq H_0(\fff, H_3(\SL_2(R), \z[1/2])).
\]
If $\fff=\fff^2$, then we get the isomorphism
\[
K_3(R)^\ind \simeq H_3(\SL_2(R), \z).
\]
Previously these results were only known
for infinite fields \cite{mirzaii-2008}.
\subsection*{Notation}
In this article by $H_i(G)$ we mean  the  homology of group $G$
with integral coefficients, namely $H_i(G, \z)$.
By $\GL_n$ (resp. $\SL_n$) we mean the general (resp. special) linear
group $\GL_n(R)$ (resp. $\SL_n(R)$), where  $R$ is a commutative ring with $1$.
If $A \arr A'$ is a homomorphism of abelian groups, by $A'/A$ we
mean $\coker(A \arr A')$ and we take other liberties of this kind.
For a group $A$, by $A_{\z[1/2]}$ we mean $A\otimes_\z \z[1/2]$.

\section{Third homology of product of two abelian groups}

In this section we will study the homology group $H_3(A\times B)$, where $A$ and $B$ are
abelian groups.

First we assume $A=B=\z/n$. By applying the  K\"unneth formula
\cite[Proposition 6.1.13]{weibel1994} to $H_3(\z/n \times \z/n)$
and using the calculation of the homology of
finite cyclic groups \cite[Theorem 6.2.2, Example 6.2.3]{weibel1994},
we obtain the exact sequence
\[
0 \arr H_3(\z/n)\oplus H_3(\z/n) \arr H_3(\z/n \times \z/n)
\arr \tors(\z/n, \z/n) \arr 0.
\]
If $p_i: \z/n \times \z/n \larr \z/n$, $i=1,2$, is projection on the $i$-th factor, then
\[
({p_1}_\ast, {p_2}_\ast): H_3(\z/n \times \z/n) \larr H_3(\z/n)\oplus H_3(\z/n)
\]
splits the above exact sequence. Thus we obtain a canonical splitting map
\[
\theta_{n,n}: \tors(\z/n, \z/n) \larr H_3(\z/n \times \z/n).
\]
If $\lan \bar{1}, n, \bar{1} \ran$ is the image of $\bar{1}\in  \z/n$ under the
isomorphism
\[
\z/n \overset{\simeq}{\larr} \tors(\z/n, \z/n),
\]
then one can show that $\theta_{n,n}(\lan \bar{1}, n, \bar{1} \ran)=\chi_{n,n}$, where
\begin{gather*}
\chi_{n,n}:= {\begin{array}{c} \!\! \sum \!\!\end{array}}_{i=1}^{n}
\Big([(\bar{1},0)|(0,\bar{1})|(0,\bar{i})]
-[(0,\bar{1})|(\bar{1},0)|(0,\bar{i})]\\
\hspace{2.3 cm}
+ [(0,\bar{1})|(0,\bar{i})|(\bar{1},0)]
+[(\bar{1},0)|(\bar{i},0)|(0,\bar{1})]\\
\hspace{2.6 cm}
 - [(\bar{1},0)|(0,\bar{1})|(\bar{i},0)]
+[(0,\bar{1})|(\bar{1},0)|(\bar{i},0)]\Big).
\end{gather*}
\cite[Chap. V, Proposition 10.6]{maclane1963}, \cite[Proposition 4.1]{mirzaii-2008}.
If $A=\z/m$ and $B=\z/n$, then the same approach shows that the exact sequence
\[
0 \!\arr\! H_3(\z/m)\oplus H_3(\z/n) \!\arr \!H_3(\z/m \times \z/n)
\!\arr \!\tors(\z/m, \z/n)\! \arr \!0,
\]
splits canonically. The splitting map
\[
\theta_{m,n}: \tors(\z/m, \z/n) \larr H_3(\z/m \times \z/n)
\]
can be computed similar to  $\theta_{n,n}$. In fact if
$\lan \overline{m/d}, d, \overline{n/d} \ran$ is the image of $\bar{1} \in \z/(m,n)$
under the isomorphism $\z/(m,n) \simeq \tors(\z/m, \z/n)$, then
$\theta_{m,n}(\lan \overline{m/d}, d, \overline{n/d} \ran)=\chi_{m,n}$, where

\begin{gather*}
\begin{array}{rl}
\vspace{2mm}
\chi_{n,m}:= & \!\!\!\! \sum_{i=1}^{n}
\Big([(\overline{\frac{m}{d}},0)|(0,\overline{\frac{n}{d}})|(0,\overline{\frac{in}{d}})]
-[(0,\overline{\frac{n}{d}})|(\overline{\frac{m}{d}},0)|(0,\overline{\frac{in}{d}})]\\
\vspace{2mm}
&\hspace{0.6cm}
+ [(0,\overline{\frac{n}{d}})|(0,\overline{\frac{in}{d}})|(\overline{\frac{m}{d}},0)]
+[(\overline{\frac{m}{d}},0)|(\overline{\frac{im}{d}},0)|(0,\overline{\frac{n}{d}})]\\
\vspace{2mm}
&\hspace{0.6cm}
 - [(\overline{\frac{m}{d}},0)|(0,\overline{\frac{n}{d}})|(\overline{\frac{im}{d}},0)]
+[(0,\overline{\frac{n}{d}})|(\overline{\frac{m}{d}},0)|(\overline{\frac{im}{d}},0)]\Big).
\end{array}
\end{gather*}

In the next proposition we extend these results to all abelian groups.

\begin{prp}\label{canonical}
Let $A$ and $B$ be abelian groups. Then we have the canonical decomposition
\[
H_3(A \times B)= \bigoplus_{i+j=3} H_i(A) \otimes H_j(B)
\oplus \tors(A,B).
\]
\end{prp}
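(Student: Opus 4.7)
The plan is to combine the K\"unneth formula with the explicit cyclic-group splittings $\theta_{m,n}$ constructed above, and then extend by naturality and a colimit argument.

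First I would specialize the K\"unneth short exact sequence
\[
0 \larr \bigoplus_{i+j=3} H_i(A) \otimes H_j(B) \larr H_3(A \times B) \larr \bigoplus_{i+j=2} \tors(H_i(A), H_j(B)) \larr 0
\]
to the situation at hand. Since $H_0 = \z$ is torsion-free, the right-hand term collapses to $\tors(H_1(A), H_1(B)) = \tors(A,B)$. The external (cross) product furnishes the canonical left-hand inclusion, and the projections $p_A, p_B: A \times B \arr A, B$ together with a slant-product-type retraction split off the $H_3(A) \oplus H_3(B)$ and mixed $H_2 \otimes H_1$ pieces canonically. So the real content is the construction of a canonical section $\theta_{A,B}: \tors(A,B) \arr H_3(A \times B)$ of the K\"unneth quotient.

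Next I would reduce to the finitely generated case. Both $H_3(- \times -)$ and $\tors(-,-)$ commute with filtered colimits, so a natural section on finitely generated pairs extends uniquely to all abelian groups via $A = \varinjlim A'$, $B = \varinjlim B'$. For $A, B$ finitely generated, the structure theorem gives $A \simeq \z^r \oplus \bigoplus_k \z/m_k$ and $B \simeq \z^s \oplus \bigoplus_\ell \z/n_\ell$; free summands do not contribute to $\tors$, so $\tors(A,B) = \bigoplus_{k,\ell} \tors(\z/m_k, \z/n_\ell)$. The candidate section is obtained by summing the cyclic splittings $\theta_{m_k, n_\ell}$ pushed forward along the inclusions $\z/m_k \times \z/n_\ell \harr A \times B$.

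The main obstacle is showing that this assembly is independent of the chosen cyclic decomposition, equivalently that the family $(\theta_{A,B})$ is natural in $A$ and $B$. I would first verify naturality of $\theta_{m,n}$ with respect to arbitrary homomorphisms $\z/m \arr \z/m'$ and $\z/n \arr \z/n'$ by direct computation on the cocycles $\chi_{m,n}$ and $\chi_{n,n}$ recalled above, using that the K\"unneth sequence is functorial and that the retractions onto the $H_i \otimes H_j$ summands are manifestly natural. Once $\theta_{m,n}$ is known to be natural on cyclics, any two natural sections of the K\"unneth surjection would differ by a natural transformation $\tors(A,B) \arr \bigoplus_{i+j=3} H_i(A) \otimes H_j(B)$ whose values on cyclic factors vanish, hence that transformation is zero on all finitely generated pairs and therefore on all pairs by the colimit argument. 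Combining the canonical K\"unneth inclusion with the resulting canonical section $\theta_{A,B}$ yields the asserted direct sum decomposition.
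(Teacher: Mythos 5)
Your proposal follows essentially the same route as the paper: apply the K\"unneth formula, reduce to finitely generated (in the paper, finite) abelian groups by a colimit argument using that homology and $\tors$ commute with filtered colimits, and assemble the canonical section $\tors(A,B) \larr H_3(A\times B)$ by pushing forward the explicit cyclic splittings $\theta_{m_k,n_\ell}$ along the inclusions $\z/m_k \times \z/n_\ell \harr A\times B$. The only difference is that you explicitly address independence of the chosen cyclic decomposition via naturality of $\theta_{m,n}$, a point the paper leaves implicit.
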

\begin{proof}
By the K\"unneth formula we have the exact sequence
\[
0 \larr \bigoplus_{i+j=3} H_i(A)\otimes H_j(B) \larr H_3(A \times B)
\larr \tors(A,B) \larr 0.
\]
We will construct a canonical splitting map
\[
\tors(A,B) \larr H_3(A \times B).
\]
It is known that direct limit with directed set index, is an exact functor and it
commutes with the homology group \cite[Chap. V, Section 5, Exercise 3]{brown1994}
and the functor Tor \cite[Corollary 2.6.17]{weibel1994}.
Since any abelian group can be written as direct limit of its finitely
generated subgroups, we may assume that $A$ and $B$
are finitely generated abelian groups. On the other hand,
\[
\tors(A,B) \simeq \tors(A_{tor}, B_{tor}),
\]
where $A_{tor}$ is the subgroup of torsion elements of $A$.
So we may even assume that $A$ and $B$ are finite abelian groups. Let
\[
A=\z/m_1 \times \dots \times \z/m_r, \ \ \
B=\z/n_1 \times \dots \times \z/n_s.
\]
Now consider the commutative diagram
\[
\begin{CD}
0 \!\arr &\! H_3(\z/m_i)\!\oplus \!H_3(\z/n_j)& \!\!\!\arr &
\!H_3(\z/m_i \!\times \!\z/n_j)&\! \arr & \!\tors(\z/m_i,\! \z/n_j)& \arr \!0\\
 & @VVV @VV{\inc_{m_i,n_j}}V @VV{\inc}V & \\
 \!\!0 \!\arr& \!\bigoplus_{i+j=3} H_i(A)\otimes H_j(B) &
 -\!\!\!-  & \!\!\!\!\!\!\!\!\!\!\!\arr H_3(A \times B)&
-\!\!\!-& \!\!\!\!\!\!\!\!\!\!\!\arr\tors(A,B)& \!\!\!\!\!\!\!\larr\! 0.
\end{CD}
\]
We have seen that the first row of this diagram splits by the canonical map $\theta_{m_i,n_j}$.
Thus
the composition
\[
\begin{CD}
\tors(\z/m_i, \z/n_j) @>{\inc_{m_i,n_j}\circ \theta_{m_i,n_j}}>> H_3(A \times B) \larr \tors(A,B)
\end{CD}
\]
is the natural inclusion map. Since
\[
\tors(A,B)=\bigoplus_{\underset{1\le j\le s}{1\le i\le r}}\tors(\z/m_i, \z/n_j),
\]
we obtain a map $\theta_{A,B}: \tors(A,B) \larr H_3(A \times B)$
that decomposes our exact sequence canonically. In fact
$\theta_{A,B}=\sum_{i,j} \inc_{m_i,n_j} \circ \theta_{m_i,n_j}$.
\end{proof}
\section{The third homology of $\GL_2$}

A commutative ring $R$ with $1$ is called a {\it ring with many units}
if for any $n \ge 2$ and for any finite number of surjective linear
forms $f_i: R^n \arr R$, there exists a $v \in  R^n$ such that, for
all $i$, $f_i(v) \in \fff$. Important examples of rings with many units
are semilocal rings with infinite residue fields. In particular for an
infinite  field $F$, any commutative finite dimensional $F$-algebra
is a semilocal ring and so is a ring with many units.
In this article we always assume that $R$ is a commutative ring with many units.

Let
\[
\fff^3 \times \GL_0  \overset{\inc}{\hookrightarrow}
\fff^2 \times \GL_1 \overset{\inc}{\hookrightarrow} \fff \times \GL_2
\overset{\inc}{\hookrightarrow} \GL_3
\]
be the natural diagonal inclusions. Here by $\fff^n$ we mean
$\fff \times \cdots \times \fff$ ($n$-times). Let
\begin{gather*}
\begin{array}{ll}
\sigma_2^1:=\inc:\fff \times \GL_2 \larr \GL_3, &\\
&\\
\vspace{1.5mm}
\sigma_1^1:\fff^2 \times \GL_1 \larr \fff \times \GL_2,&
(a,b,c) \mt (b,a,c),\\
\sigma_1^2=\inc: \fff^2 \times \GL_1 \larr \fff \times \GL_2,&
(a,b,c) \mt (a,b,c),\\
&\\
\vspace{1.5mm}
\sigma_0^1:\fff^3 \times \GL_0  \larr \fff^2 \times \GL_1,& (a,b,c) \mt (b,c,a),\\
\vspace{1.5mm}
\sigma_0^2:\fff^3 \times\GL_0  \larr \fff^2 \times\GL_1,&(a,b,c) \mt (a, c, b),\\
\sigma_0^3=\inc:\fff^3 \times \GL_0  \larr \fff^2 \times\GL_1,&
(a,b,c) \mt (a,b,c).
\end{array}
\end{gather*}

It is easy to see that the chain of maps
\[
\begin{CD}
H_3(\fff^3 \times \GL_0)  @>{\sigma_0^1}_\ast-{\sigma_0^2}_\ast+
{\sigma_0^3}_\ast>> H_3(\fff^2 \times \GL_1) \\
@>{\sigma_1^1}_\ast-{\sigma_1^2}_\ast>>
\!\!\!\!\!\!\!\!\!\!\!\!\!H_3(\fff \times \GL_2)\!\!\!\!\!\!\!\!\!\!\!\!\!
@>{\sigma_2^1}_\ast>>\! H_3(\GL_3) \!\larr 0
\end{CD}
\]
is a chain complex. The following result has been proved in
\cite[Corollary 3.5]{mirzaii-2008}.

\begin{thm}\label{exact-GL3}
The sequence
\[
\begin{CD}
H_3(\fff^2 \times \GL_1) @>{\sigma_1^1}_\ast-{\sigma_1^2}_\ast>>
H_3(\fff \times \GL_2) @>{\sigma_2^1}_\ast>> H_3(\GL_3) \larr 0
\end{CD}
\]
is exact.
\end{thm}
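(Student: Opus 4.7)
The plan is to realize this exact sequence as the tail of the spectral sequence associated to the action of $\GL_3$ on a highly acyclic semi-simplicial set of unimodular frames in $R^3$. Over rings with many units, such complexes are acyclic in a sufficient range (by results of Van der Kallen and Nesterenko--Suslin, extended to the rings-with-many-units setting by Guin), which is exactly what controls the low-dimensional part of the equivariant spectral sequence.

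Concretely, I would take $X_\bullet$ to be the semi-simplicial set whose $p$-simplices are ordered $(p+1)$-tuples of unimodular vectors in $R^3$ in general position, and form the hyperhomology spectral sequence for the diagonal action of $\GL_3$ on $X_\bullet$ and the bar resolution. Standard arguments show that the augmented chain complex of $X_\bullet$ is acyclic in degrees $\le 2$, and $\GL_3$ acts transitively on $p$-simplices. The stabilizer of the standard simplex $(e_0, \ldots, e_p)$ is a parabolic subgroup whose Levi is $\fff^{p+1} \times \GL_{3-p-1}$ and whose unipotent radical is homologically trivial, again by Guin's theorem in the rings-with-many-units setting. The spectral sequence therefore has
\[
E^1_{p,q} = H_q(\fff^{p+1} \times \GL_{3-p-1}; \z),
\]
abutting to $H_{p+q}(\GL_3; \z)$ in the relevant range. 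Its $d^1$ differentials are the alternating sums of the face-map-induced inclusions, which after matching conventions coincide (up to signs) with the maps $\sigma_i^j$ in the statement.

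The claimed surjectivity of ${\sigma_2^1}_\ast$ onto $H_3(\GL_3)$ is then the edge-map statement on the $q=3$ row, and the identification of its kernel with the image of ${\sigma_1^1}_\ast-{\sigma_1^2}_\ast$ is exactness of that row at the spot $p=0$. The main obstacle is the careful matching of the $d^1$ differentials with the specific maps $\sigma_i^j$, including the permutation built into $\sigma_1^1$ (the swap $(a,b,c)\mapsto(b,a,c)$) and the signs produced by the alternating sum of face maps; in particular, one must check that the ``twist'' in $\sigma_1^1$ is exactly what the second face map of $X_\bullet$ produces after the Levi identification. A secondary technical point is to verify that no higher differentials $d^r$ with $r\ge 2$ interfere in the bidegrees $(p,q)$ with $p+q=3$, so that the exact sequence emerges cleanly from the $E^2$ page rather than being obscured by extension problems.
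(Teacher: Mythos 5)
Your overall strategy is the right one in spirit: the paper gives no proof of this theorem but quotes it from Corollary 3.5 of the author's earlier paper \emph{Third homology of general linear groups}, and the proof there is indeed extracted from the equivariant spectral sequence of $\GL_3$ acting on a general-position complex, with $E^1$-terms $H_q(\fff^{p+1}\times\GL_{2-p})$ and $d^1$ given by alternating sums of the $\sigma$'s. However, as written your sketch has two genuine gaps. First, the complex you chose does not produce the stabilizers you claim. For ordered tuples of unimodular \emph{vectors}, the stabilizer of $(e_1,\dots,e_{p+1})$ consists of the matrices whose first $p+1$ columns are $e_1,\dots,e_{p+1}$; this is an extension of $\GL_{2-p}$ by a unipotent group, and after the center-kills argument its homology is $H_q(\GL_{2-p})$ with no $\fff^{p+1}$ factor (this is the spectral sequence behind Suslin's stability theorem, not the one you need). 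To get the torus factor you must act on tuples of \emph{lines} (rank-one free direct summands) in general position, so that the diagonal torus survives in the Levi quotient of the stabilizer. Note also that transitivity on $p$-simplices fails for $p\ge 3$, which matters because the low rows feed $d^3$ and $d^4$ differentials into the positions you care about, and that convergence in total degree $3$ requires acyclicity of the augmented complex through degree $3$, not just $2$.

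Second, and more seriously, what you call a secondary technical point --- that no higher differentials interfere --- is in fact the entire content of the proof. Exactness of the $q=3$ row at the two relevant spots requires showing that $d^2\colon E^2_{2,2}\to E^2_{0,3}$ and its companions from the $q=1$ and $q=0$ rows contribute nothing beyond the image of $d^1$. This is not a formal vanishing: it is exactly where Guin's computations $H_1(\GL_1)\simeq H_1(\GL_2)\simeq\fff$ and $H_2(\GL_2)\simeq H_2(\GL_1)\oplus K_2^M(R)$, together with explicit K\"unneth decompositions of the $E^1$-terms, enter the argument. A complete write-up must either carry out this differential analysis or cite it. As it stands, your argument establishes only that the displayed sequence is a chain complex whose homology is controlled by a spectral sequence, not that it is exact.
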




Using the K\"unneth formula \cite[Proposition 6.1.13]{weibel1994}, we
have the decomposition
$H_3(\fff \times \GL_{2})= {\begin{array}{c} \!\! \bigoplus \!\!\end{array}}_{i=0}^4 S_i$,
where
\begin{gather*}
\begin{array}{l}
\vspace{1.5mm}
S_0=H_3(\GL_2),\\
\vspace{1.5mm}
S_i=H_i(\fff) \otimes H_{3-i}(\GL_{2}),\ \ 1 \le i \le 3, \\
\vspace{1.5mm}
S_4=\tors(\rr, H_1(\GL_2))\simeq \tors(\mu(R), \mu(R)).
\end{array}
\end{gather*}
Note that by the homological stability, $\fff \simeq H_1(\GL_1) \simeq H_1(\GL_2)$
\cite[Theorem 1]{guin1989}. This decomposition is canonical.
The splitting map
\[
S_4 \simeq \tors(\mu(R), \mu(R)) \larr H_3(\fff \times \GL_{2})
\]
is given by the composition
\begin{gather*}
S_4 \simeq \tors(\mu(R),\mu(R))
 \overset{\theta_{R,R}}{\larr}  H_3(\fff \times \fff)
\overset{q_\ast}{\larr}  H_3(\fff \times \GL_{2}),
\end{gather*}
where
\[
q:\rr \times \fff \larr \fff \times \GL_{2}, \ \ (a, b) \mt (a, b, 1),
\]
and $\theta_{R,R}$ is obtained from Proposition \ref{canonical}. Using the decomposition
\begin{gather*}
H_2(\GL_2)=H_2(\GL_1) \oplus K_2^M(R)
\end{gather*}
\cite[Theorem 2]{guin1989}, we have $S_1=S_1' \oplus S_1''$, where
\begin{gather*}
S_1'=\rr \otimes H_2(\GL_1),\ \ \
S_1''=\rr \otimes K_2^M(R).
\end{gather*}
We should remark that the  inclusion $K_2^M(R) \larr H_2(\GL_2)$, in
the decomposition of $H_2(\GL_2)$, is given by the formula
\[
\{a,b\} \mt {\rm \bf{c}} (\diag(a,1),\diag(b,b^{-1}))
\]
\cite[Proposition A.11]{elbaz1998}.
For the definition of Milnor's $K$-groups, $K_n^M(R)$, over commutative rings and
their study over rings with many units, we refer the interested readers to
subsection 3.2 of \cite{guin1989}.

Let us to introduce the notation ${\rm \bf{c}} (-,-)$ in a more general setting
and state some of its main properties. These will be used frequently in
this article. Let $G$ be a group and set
\[
{\rm \bf{c}}(g_1, g_2,\dots, g_n):=\sum_{\si \in \Sigma_n} {{\rm
sign}(\si)}[g_{\si(1)}| g_{\si(2)}|\dots|g_{\si(n)}] \in
H_n(G),
\]
where $g_1, \dots, g_n \in G$ pairwise commute and $\Sigma_n$ is the symmetric
group of degree $n$. Here we use the bar resolution of $G$
\cite[Chapter I, Section 5]{brown1994} to define the homology of $G$.

\begin{lem}
Let $G$ and $G'$ be two groups.
\par {\rm (i)} If $h_1\in G$ commutes with all the elements
$g_1, \dots, g_n \in G$, then
\[
{\rm \bf{c}}(g_1h_1, g_2,\dots, g_n)= {\rm \bf{c}}(g_1, g_2,\dots,
g_n)+{\rm \bf{c}}(h_1, g_2,\dots, g_n).
\]
\par {\rm (ii)}
For every $\sigma \in \Sigma_n$, ${\rm \bf{c}}(g_{\sigma(1)},\dots,
g_{\sigma(n)})={\rm sign(\sigma)} {\rm \bf{c}}(g_1,\dots,
g_n)$.
\par {\rm (iii)}
The cup product of ${\rm \bf{c}}(g_1,\dots, g_p)\in H_p(G)$
and ${\rm \bf{c}}(g_1',\dots, g_q') \in H_q(G')$ is ${\rm
\bf{c}}((g_1, 1), \dots, (g_p,1),(1,g_1'), \dots, (1,g_q')) \in
H_{p+q}(G \times G')$.
\end{lem}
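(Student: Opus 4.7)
My plan is to take the three parts in order of increasing difficulty: (ii) is immediate from the definition, (i) reduces to a universal computation on $\z^{n+1}$, and (iii) uses the Eilenberg--Mac Lane shuffle formula for the cross product on the bar complex.

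For (ii), I would simply reindex the sum: write $\mathbf{c}(g_{\sigma(1)},\ldots,g_{\sigma(n)})=\sum_{\tau\in\Sigma_n}\mathrm{sign}(\tau)[g_{\sigma\tau(1)}|\cdots|g_{\sigma\tau(n)}]$ and substitute $\rho=\sigma\tau$. Since $\mathrm{sign}(\tau)=\mathrm{sign}(\sigma)\,\mathrm{sign}(\rho)$, pulling $\mathrm{sign}(\sigma)$ out of the sum yields exactly $\mathrm{sign}(\sigma)\,\mathbf{c}(g_1,\ldots,g_n)$.

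For (i), the idea is to exploit the hypothesis that $g_1,h_1,g_2,\ldots,g_n$ pairwise commute, so the assignment $e_0\mapsto g_1,\ e_1\mapsto h_1,\ e_i\mapsto g_i\ (i\ge 2)$ extends to a group homomorphism $\phi:\z^{n+1}\arr G$. Consider the three homomorphisms $\alpha_+,\alpha_1,\alpha_2:\z^n\arr\z^{n+1}$ which send the first standard generator $d_1$ to $e_0+e_1$, $e_0$, and $e_1$ respectively, and fix $d_i\mapsto e_i$ for $i\ge 2$. By naturality of the bar resolution, $(\phi\circ\alpha_+)_*\mathbf{c}(d_1,\ldots,d_n)=\mathbf{c}(g_1h_1,g_2,\ldots,g_n)$, and similarly for the other two compositions. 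It therefore suffices to prove $\alpha_{+*}=\alpha_{1*}+\alpha_{2*}$ on the class $\mathbf{c}(d_1,\ldots,d_n)\in H_n(\z^n)$. This follows from the classical identification $H_*(\z^m)\cong\Lambda^*\z^m$, under which $\mathbf{c}(d_1,\ldots,d_n)$ corresponds to $d_1\wedge\cdots\wedge d_n$, together with the multilinearity $(e_0+e_1)\wedge e_2\wedge\cdots\wedge e_n=e_0\wedge e_2\wedge\cdots\wedge e_n+e_1\wedge e_2\wedge\cdots\wedge e_n$.

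For (iii), I would invoke the shuffle formula for the Eilenberg--Zilber cross product on bar complexes: $[a_1|\cdots|a_p]\times[b_1|\cdots|b_q]=\sum_{\mu\in\mathrm{Sh}(p,q)}\mathrm{sign}(\mu)\,\mu\cdot[(a_1,1)|\cdots|(a_p,1)|(1,b_1)|\cdots|(1,b_q)]$. Expanding $\mathbf{c}(g_1,\ldots,g_p)\times\mathbf{c}(g_1',\ldots,g_q')$ produces a triple sum indexed by $(\sigma,\tau,\mu)\in\Sigma_p\times\Sigma_q\times\mathrm{Sh}(p,q)$, and the classical bijection of this product with $\Sigma_{p+q}$ --- sending $(\sigma,\tau,\mu)$ to the unique $\pi$ with $\mathrm{sign}(\pi)=\mathrm{sign}(\sigma)\,\mathrm{sign}(\tau)\,\mathrm{sign}(\mu)$ --- collapses the triple sum to the single sum over $\Sigma_{p+q}$ defining $\mathbf{c}((g_1,1),\ldots,(g_p,1),(1,g_1'),\ldots,(1,g_q'))$. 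The main obstacle I anticipate is the sign-bookkeeping in (iii), namely the careful verification that the bijection $\Sigma_p\times\Sigma_q\times\mathrm{Sh}(p,q)\cong\Sigma_{p+q}$ is sign-multiplicative; part (i) is clean once one accepts the Pontryagin/wedge-product identification of $H_*(\z^m)$, and part (ii) is essentially a tautology.
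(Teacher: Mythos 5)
Your proposal is correct, but there is essentially nothing in the paper to compare it with: the author's entire proof is the sentence ``the proofs follow from direct computations, so we leave it to the interested readers.'' Your part (ii) is exactly such a direct computation (a chain-level reindexing, valid before passing to homology), while for (i) and (iii) you substitute structural arguments for brute force, and both are sound. Two small points are worth tightening. First, in (i), the fact that $\mathbf{c}(d_1,\dots,d_n)$ corresponds to $d_1\wedge\cdots\wedge d_n$ under $H_*(\z^m)\simeq\Lambda^*\z^m$ is usually established via the Pontryagin product, whose chain-level description on the bar complex is precisely the shuffle formula of your part (iii); so either prove (iii) first and deduce the needed multilinearity of $\mathbf{c}(d_1,\dots,d_n)$ from it, or cite the exterior-algebra statement outright (Brown, Chapter V, or Mac Lane). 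An entirely elementary alternative for (i) is to exhibit the difference of the two sides as an explicit boundary --- for $n=1$ it is $[g_1h_1]-[g_1]-[h_1]=-\partial[g_1|h_1]$, and the general case follows from this together with the shuffle bookkeeping. Second, in (iii) the bijection $\Sigma_p\times\Sigma_q\times\mathrm{Sh}(p,q)\to\Sigma_{p+q}$ should be described as $(\sigma,\tau,\mu)\mapsto\mu\circ(\sigma\sqcup\tau)$, the unique factorization of a permutation of $p+q$ letters as a $(p,q)$-shuffle composed with a block permutation; a permutation is not determined by its sign, so ``the unique $\pi$ with $\mathrm{sign}(\pi)=\mathrm{sign}(\sigma)\mathrm{sign}(\tau)\mathrm{sign}(\mu)$'' is only loose phrasing for this. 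With those phrasings repaired, your argument is a complete proof of the lemma the paper leaves to the reader.
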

\begin{proof}
The proofs follow from direct computations, so we leave it to the interested readers.
\end{proof}

Again using the K\"unneth formula and Proposition \ref{canonical}, we obtain the canonical
decomposition  $H_3(\fff^2 \times \GL_1)=\bigoplus_{i=0}^8 T_i$, where

\begin{gather*}
\begin{array}{ll}
\vspace{1.5mm}
T_0=H_3(\GL_1),\\
\vspace{1.5mm}
T_1=\bigoplus_{i=1}^3 T_{1,i}=\bigoplus_{i=1}^3 H_i(R_1^\ast)\otimes H_{3-i}(\GL_1),\\
\vspace{1.5mm}
T_2=\bigoplus_{i=1}^3 T_{2,i}=
\bigoplus_{i=1}^3 H_i(R_2^\ast)\otimes H_{3-i}(\GL_1),\\
\vspace{1.5mm}
T_3=R_1^\ast \otimes R_2^\ast\otimes H_1(\GL_1),\\
\vspace{1.5mm}
T_4=\tors(R_1^\ast, R_2^\ast)\simeq \tors(\mu(R), \mu(R)),\\
\vspace{1.5mm}
T_5= \tors(R_1^\ast, H_1(\GL_1))\simeq \tors(\mu(R), \mu(R)), \\
\vspace{1.5mm}
T_6=\tors(R_2^\ast, H_1(\GL_1))\simeq \tors(\mu(R), \mu(R)),\\
\vspace{1.5mm}
T_7=R_1^\ast\otimes H_2(R_2^\ast),\\
\vspace{1.5mm}
T_8=H_2(R_1^\ast)\otimes R_2^\ast.
\end{array}
\end{gather*}
Here by $R_i^\ast$ we mean the $i$-th component of
$\fff \times \cdots \times\fff$.
Now we give an explicit description of restriction of the map
$\alpha:={\sigma_1^1}_\ast-{\sigma_1^2}_\ast$
on all $T_i$'s. By direct computations one sees that
\[
\begin{array}{ll}
\vspace{1.5mm}
\alpha|_{T_0}: T_0 \larr S_0, & x \mt 0, \\
\vspace{1.5mm}
\alpha|_{T_{1,i}}:T_{1,i} \larr S_0 \oplus S_i, &
x_i \otimes x_i' \mt (x_i \cup x_i', -x_i \otimes x_i'), 1 \le i \le 3,\\
\vspace{1.5mm}
\alpha|_{T_{2,i}}:T_{2,i} \larr S_0 \oplus S_i, &
y_i \otimes y_i' \mt (-y_i \cup y_i', y_i \otimes y_i'), 1 \le i \le 3,\\
\vspace{1.5mm}
\alpha|_{T_3}: T_3 \larr S_1,
& a\otimes b \otimes c \mt -b \otimes (a\cup c)-a \otimes(b\cup c),\\
\vspace{1.5mm}
\alpha|_{T_4}: T_4 \larr S_4, & z \mt 0,\\
\vspace{1.5mm}
\alpha|_{T_5}: T_5 \larr S_0 \oplus S_4, & u \mt ({\sigma_1^1}_\ast(u), -u),\\
\vspace{1.5mm}
\alpha|_{T_6}: T_6 \larr S_0 \oplus S_4, & v \mt (-{\sigma_1^2}_\ast(v), v),\\
\vspace{1.5mm}
\alpha|_{T_7}: T_7 \larr S_1 \oplus S_2, &
d\otimes u'\mt (-d\otimes u', u' \otimes d),\\
\vspace{1.5mm}
\alpha|_{T_8}: T_8 \larr S_1 \oplus S_2, & v'\otimes e
\mt (e\otimes v', -v'\otimes e),
\end{array}
\]
where $x\cup y$ is the cup product of $x$ and $y$.
\section{The kernel of $H_3(\GL_2)\arr H_3(\GL_3)$}

Our goal in this article is to study the kernel of the map
$\inc_\ast:H_3(\GL_2)\arr H_3(\GL_3)$.
So let $x\in \ker(\inc_\ast)$. Then
\[
(x, 0, 0, 0, 0) \in \ker ({\sigma_2^1}_\ast)
\subseteq \bigoplus_{i=0}^4 S_i=H_3(\fff \times \GL_2).
\]

By Theorem \ref{exact-GL3} and by the explicit description
of $\alpha={\sigma_1^1}_\ast-{\sigma_1^2}_\ast$ given in the previous section,
there exists an element
\[
l=(0,(x_i\otimes x_i')_{1 \le i \le 3},
(y_i\otimes y_i')_{1 \le i \le 3}, {\begin{array}{c} \!\! \sum \!\! \end{array}}
 a\otimes b \otimes c, 0, u,v, d\otimes u',
v' \otimes e)
\]
in $H_3(\fff^2 \times \GL_1)$ such that $\alpha(l)=(x, 0, 0, 0, 0)$.

Set $\beta:={\sigma_0^1}_\ast-{\sigma_0^2}_\ast+{\sigma_0^3}_\ast$, and
consider the following summands of $H_3(\fff^3 \times \GL_0)$,
\[
T_1':=R_1^\ast \otimes H_2(R_2^\ast), \ \ \ \ \ T_2':=H_2(R_1^\ast) \otimes R_2^\ast.
\]
By easy computations one sees that
\[
\begin{array}{ll}
\vspace{1.5mm}
\beta|_{T_1'}:T_1' \larr T_{1,1} \oplus T_{1,2} \oplus T_7,&
f\otimes w \mt (-f\otimes w, w\otimes f, f\otimes w)\\
\beta|_{T_2'}:T_2' \larr T_{1,1} \oplus T_{1,2} \oplus T_8,&
w'\otimes f' \mt (f'\otimes w', -w'\otimes f', w'\otimes f').
\end{array}
\]
So we may assume $d\otimes u'=0, v' \otimes e=0$. Therefore we have
\[
\begin{array}{l}
\vspace{1.5mm}
\sum_{i=1}^3 x_i \cup x_i' - \sum_{i=1}^3 y_i \cup y_i' +
{\sigma_1^1}_\ast(u) -{\sigma_1^2}_\ast(v)=x,\\
\vspace{1.5mm}
-x_1 \otimes x_1' +  y_1 \otimes y_1' -
\sum [b \otimes (a\cup c)+a \otimes(b\cup c)]=0,\\
\vspace{1.5mm}
-x_2 \otimes x_2' +  y_2 \otimes y_2'=0,\\
\vspace{1.5mm}
-x_3 \otimes x_3' +  y_3 \otimes y_3'=0,\\
-u+v=0.
\end{array}
\]
Therefore we obtain the following relations
\[
\begin{array}{l}
\vspace{1.5mm}
x=x_1 \cup x_1' -  y_1 \cup y_1' \in S_0=H_3(\GL_2),\\
x_1 \otimes x_1' -  y_1 \otimes y_1' =
-\sum b \otimes (a\cup c)+a \otimes(b\cup c) \in S_1.
\end{array}
\]
Under the decomposition $H_2(\GL_2)=H_2(\GL_1)\oplus K_2^M(R)$, we have
\[
a\cup b= {\rm \bf{c}} (\diag(a,1),\diag(1, b))=({\rm \bf{c}} (a, b) , \{a,b\}).
\]
Thus under the decomposition $S_1=S_1' \oplus S_1''$, we have
\[
\Big(x_1 \! \otimes x_1'\!\! - \! y_1 \!\otimes y_1'\! +
\!\!{\begin{array}{c}\!\! \sum \!\! \end{array}} b\otimes {\rm \bf{c}} (a, c)\!+\!
a\otimes {\rm \bf{c}} (b, c), \! {\begin{array}{c} \!\!\sum \!\! \end{array}}
 b\otimes \!\{a, c\}
\! + a\otimes \! \{b, c\} \!\Big)\! =\! 0,
\]
and hence
\begin{gather*}
x_1 \otimes x_1' - y_1 \otimes y_1'=-{\begin{array}{c}\!\! \sum \!\! \end{array}}
 b\otimes {\rm \bf{c}} (a, c)+
a\otimes {\rm \bf{c}} (b, c),\\
{\begin{array}{c}\!\! \sum \!\! \end{array}} b\otimes \{a, c\}+ a\otimes \{b, c\}=0.
\end{gather*}
Therefore
\[
\begin{array}{rl}
\vspace{2mm}
x \!\!\!\! & = -\sum {\rm \bf{c}} (\diag(a,1), \diag(1,b),\diag(1,c))\\
\vspace{1.5mm}
& \ \ \ \ \ \ \ \ \! + {\rm \bf{c}} (\diag(b,1), \diag(1,a),\diag(1,c))\\
&= \sum {\rm \bf{c}} (\diag(a,1), \diag(1,b),\diag(c,c^{-1})),
\end{array}
\]
such that
${\begin{array}{c} \!\! \sum \!\! \end{array}}a\otimes \{b, c\}+ b\otimes \{a, c\}=0$.
From now on, we will use the following notation:
\[
l_{a,b,c}
= {\rm \bf{c}} (\diag(a,1), \diag(1,b),\diag(c, c^{-1}))
\]

Hence we have proved most parts of the following theorem.

\begin{thm}\label{kernel}
Let $R$ be a commutative ring with many units. Then the kernel of
$\inc_\ast:H_3(\GL_2) \arr H_3(\GL_3)$  consists of elements of the form
${\begin{array}{c} \!\! \sum \!\! \end{array}}
{\rm \bf{c}} (\diag(a,1), \diag(1,b),\diag(c, c^{-1}))$
provided that
\[
{\begin{array}{c} \!\! \sum \!\! \end{array}}
a\otimes \{b, c\}+b\otimes \{a, c\}=0 \in \fff \otimes K_2^M(R).
\]
In particular $\ker(\inc_\ast) \subseteq \fff\cup H_2(\GL_1) \subseteq H_3(\GL_2)$,
where the cup product is induced by the diagonal inclusion
$\inc: \fff \times \GL_1 \arr \GL_2$. Moreover $\ker(\inc_\ast)$ is a $2$-torsion
group.
\end{thm}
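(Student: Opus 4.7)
The preceding derivation has essentially proved most of the statement: every $x\in\ker(\inc_\ast)$ was shown to be of the form $\sum l_{a,b,c}$ with $\sum a\otimes\{b,c\}+b\otimes\{a,c\}=0$. What remains is the converse inclusion, the containment $\ker(\inc_\ast)\subseteq\fff\cup H_2(\GL_1)$, and the $2$-torsion assertion.

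For the converse, reverse the derivation. Given $x=\sum l_{a,b,c}$ with the Milnor relation, build an explicit lift $l\in H_3(\fff^2\times\GL_1)$ whose $T_3$-component is $\sum a\otimes b\otimes c$, whose $T_{1,1}$-component is $-\sum[b\otimes(a\wedge c)+a\otimes(b\wedge c)]\in\fff\otimes H_2(\GL_1)$, and whose remaining components are zero. Applying the formulas for $\alpha|_{T_i}$ together with the splitting $H_2(\GL_2)=H_2(\GL_1)\oplus K_2^M(R)$ -- whose $K_2^M$-contribution vanishes by hypothesis -- one checks $\alpha(l)=(x,0,0,0,0)$ in $H_3(\fff\times\GL_2)$, and Theorem~\ref{exact-GL3} places $x$ in $\ker({\sigma_2^1}_\ast)=\ker(\inc_\ast)$.

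The containment $\ker(\inc_\ast)\subseteq\fff\cup H_2(\GL_1)$ is verified on generators. Splitting $\diag(c,c^{-1})=\diag(c,1)\cdot\diag(1,c^{-1})$ and using bilinearity of ${\rm \bf{c}}$ in commuting arguments together with antisymmetry yields
\[
l_{a,b,c}=-(a\wedge c)\cup b-a\cup(b\wedge c),
\]
where $\cup$ denotes the cup product induced by the diagonal inclusion $\fff\times\GL_1\hookrightarrow\GL_2$; both summands manifestly lie in $\fff\cup H_2(\GL_1)$.

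The $2$-torsion is the main obstacle. The plan is to exploit the outer automorphism $\tau\colon\GL_2\to\GL_2$, $g\mapsto(g^T)^{-1}$. By trilinearity of ${\rm \bf{c}}$ in commuting arguments, $\tau_\ast(l_{a,b,c})=(-1)^3 l_{a,b,c}=-l_{a,b,c}$, and since $\tau$ commutes with stabilization $\GL_2\hookrightarrow\GL_3$, $\tau_\ast$ preserves $\ker(\inc_\ast)$. The crux is to show that $\tau_\ast$ acts as the identity on $\ker(\inc_\ast)$. Using the displayed decomposition, this reduces -- because the Milnor relation forces the $\Lambda^2\fff$-part to lie in $\fff\otimes I$, where $I=\ker(\Lambda^2\fff\to K_2^M(R))$ is generated by the Steinberg elements $a\wedge(1-a)$ -- to verifying $\tau$-invariance of $\alpha\cup(a\wedge(1-a))$ in $H_3(\GL_2)$ up to $2$-torsion. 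Combining the two yields $l_{a,b,c}=\tau_\ast(l_{a,b,c})=-l_{a,b,c}$ on $\ker(\inc_\ast)$, so $2\ker(\inc_\ast)=0$.
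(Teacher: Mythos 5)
Your reduction of the theorem to the $2$-torsion claim is sound and matches the paper: the preceding section already shows every element of $\ker(\inc_\ast)$ has the form $\sum l_{a,b,c}$ subject to the Milnor relation, your reverse construction of a lift $l$ is essentially the paper's own converse argument (done there more directly in the Remark via the map $\Psi$), and your identity $l_{a,b,c}=-b\cup(a\wedge c)-a\cup(b\wedge c)$ gives the containment in $\fff\cup H_2(\GL_1)$ once you conjugate the first summand by the permutation matrix to move the $H_2$-factor into the $\GL_1$-slot (as written, $(a\wedge c)\cup b$ lies in $H_2(\fff)\cup H_1(\GL_1)$, not $H_1(\fff)\cup H_2(\GL_1)$).

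The $2$-torsion argument, however, has a genuine gap, and it is exactly the step the paper regards as the only thing left to prove. You correctly get $\tau_\ast(l_{a,b,c})=-l_{a,b,c}$ for $\tau(g)=(g^{T})^{-1}$, but the companion claim that $\tau_\ast$ acts as the identity on $\ker(\inc_\ast)$ is never established, and it cannot be established by the route you sketch: the kernel lies in the image of $H_3$ of the diagonal torus, on which $\tau$ restricts to inversion, so $\tau_\ast$ acts by $(-1)^3=-1$ on \emph{every} class in sight --- including $\alpha\cup(a\wedge(1-a))$, since $a^{-1}\wedge(1-a)^{-1}=a\wedge(1-a)$ leaves one uncancelled sign from the first slot. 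Thus ``$\tau_\ast=\id$ on the kernel'' is literally equivalent to the statement you are trying to prove, and the argument is circular; moreover, even granting your weakened version ``$\tau$-invariance up to $2$-torsion,'' combining $\tau_\ast x=-x$ with $\tau_\ast x\equiv x$ only yields $4x=0$, not $2x=0$. The paper's actual device is different and self-contained: it introduces the composite
\[
\Phi:\fff\otimes K_2^M(R)\xrightarrow{\;\id\otimes\iota\;}\fff\otimes H_2(\GL_2)
\xrightarrow{\;\cup\;}H_3(\fff\times\GL_2)\xrightarrow{\;\alpha_\ast\;}H_3(\GL_2),
\]
where $\alpha(u,A)=uA$ is scalar multiplication, computes
$\Phi(a\otimes\{b,c\})={\rm \bf{c}}(\diag(a,a),\diag(b,1),\diag(c,c^{-1}))$,
and then expands $0=\Phi(a\otimes\{b,c\}+b\otimes\{a,c\})=-2l_{a,b,c}$ using multilinearity of ${\rm \bf{c}}$. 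You would need to replace your $\tau$-argument with this (or some other complete) mechanism that converts the Milnor relation into the equation $2l_{a,b,c}=0$.
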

\begin{proof}
The only part that remains to be proved is that $\ker(\inc_\ast)$ is a 2-torsion
group. Let $x \in \ker(\inc_\ast)$. For simplicity we may assume that
$x=l_{a,b,c}={\rm \bf{c}} (\diag(a,1), \diag(1,b),\diag(c, c^{-1}))$,
such that $a\otimes \{b, c\}+b\otimes \{a, c\}=0$.
Let $\Phi$ be the following composition
\[
\fff \otimes K_2^M(R) \overset{\id_\fff \otimes \iota}{\larr}
\fff \otimes H_2(\GL_2) \overset{\cup}{\larr}
 H_3(\fff \times \GL_2) \overset{\alpha_\ast}{\larr} H_3(\GL_2),
\]
where $\iota:K_2^M(R) \arr H_2(\GL_2)$ is described in the previous section,
$\cup$ is the cup product
and $\alpha: \fff \times \GL_2 \arr \GL_2$  is given by $(a, A) \mt aA$.
It is easy to see that
\[
\Phi(a \otimes \{b, c\})={\rm \bf{c}} (\diag(a,a), \diag(b,1),\diag(c,c^{-1})).
\]
Now with easy computations, one sees that
\[
\begin{array}{rl}
\vspace{1.5mm}
0\!\!\!\!&=\Phi(0)\\
\vspace{1.5mm}
&=\Phi(a\otimes \{b, c\}+b\otimes \{a, c\})\\
\vspace{1.5mm}
&={\rm \bf{c}} (\diag(a,a), \diag(b,1),\diag(c,c^{-1}))\\
\vspace{1.5mm}
&\ +{\rm \bf{c}} (\diag(b,b), \diag(a,1),\diag(c,c^{-1}))\\
&=-2l_{a,b,c}.
\end{array}
\]
This completes the proof of the theorem.
\end{proof}

\begin{rem}
One can show directly that if
$a\otimes \{b, c\}+b\otimes \{a, c\}=0$, then
$l_{a,b,c} \in \ker(\inc_\ast:H_3(\GL_2)\arr H_3(\GL_3))$. To see this, let $\Psi$
be the following composition
\[
\fff \otimes K_2^M(R) \overset{\id_\fff \otimes \iota}{\larr}
\fff \otimes H_2(\GL_2) \overset{\cup}{\larr}
 H_3(\fff \times \GL_2)
 {\larr} H_3(\GL_3).
\]
Then it is easy to see that
\[
\Psi(a \otimes \{b,c\})={\rm \bf{c}} (\diag(a,1,1), \diag(1,b,1),\diag(1,c,c^{-1})).
\]
Now we have
\[
\begin{array}{rl}
\vspace{1.5mm}
\inc_\ast(l_{a,b,c})\!\!\!\!& =+{\rm \bf{c}} (\diag(1, a, 1),
\diag(1,1,b),\diag(1,c,c^{-1}))\\
\vspace{1.5mm}
&  =+{\rm \bf{c}} (\diag(a,1,1), \diag(1,b,1),\diag(c,c^{-1},1))\\
\vspace{1.5mm}
&= -{\rm \bf{c}} (\diag( a,1,1), \diag(1,b,1),\diag(1,c,1)\\
\vspace{1.5mm}
& \ \ \  -{\rm \bf{c}} (\diag(b, 1,1), \diag(1,a,1),\diag(1,c,1)\\
\vspace{1.5mm}
&= -{\rm \bf{c}} (\diag(a,1,1), \diag(1,b,1),\diag(1,1,c,))\\
\vspace{1.5mm}
&\ \ \  -{\rm \bf{c}} (\diag(a,1,1), \diag(1,b,1),\diag(1, c,c^{-1}))\\
\vspace{1.5mm}
&\ \ \  -{\rm \bf{c}} (\diag(b,1,1), \diag(1,a,1),\diag(1,1,c))\\
\vspace{1.5mm}
&\ \ \  -{\rm \bf{c}} (\diag(b,1,1), \diag(1,a,1),\diag(1,c,c^{-1}))\\
\vspace{1.5mm}
&=-\Psi(a\otimes \{b, c\}+b\otimes \{a, c\})\\
&=0.
\end{array}
\]
\end{rem}
~
\\
\begin{cor}\label{injectivity}
Let $R$ be a ring with many units.
\par { \rm (i)} The natural map $\inc_\ast: H_3(\GL_2, \z[1/2])\arr H_3(\GL_3, \z[1/2])$
is injective.
\par { \rm (ii)} If $\fff=\fff^2=\{a^2|a \in \fff\}$, then
$\inc_\ast: H_3(\GL_2)\arr H_3(\GL_3)$ is injective.
\end{cor}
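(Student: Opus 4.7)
Both parts rest on the $2$-torsion statement in Theorem \ref{kernel}. Write $K := \ker(\inc_\ast\colon H_3(\GL_2,\z)\to H_3(\GL_3,\z))$.

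For part (i) the plan is formal. Since $\z[1/2]$ is flat over $\z$, the universal coefficient theorem yields $H_3(G,\z[1/2])\cong H_3(G,\z)\otimes_\z\z[1/2]$ (the $\Tor_1^\z(-,\z[1/2])$ term vanishes), and flatness implies $-\otimes_\z\z[1/2]$ commutes with kernels. Thus
\[
\ker\!\big(\inc_\ast\colon H_3(\GL_2,\z[1/2])\to H_3(\GL_3,\z[1/2])\big)\;\cong\;K\otimes_\z\z[1/2],
\]
and the right-hand side vanishes because $K$ is $2$-torsion by Theorem \ref{kernel}. This gives (i).

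For part (ii), the hypothesis $\fff=\fff^2$ says that as an abelian group, $\fff$ is $2$-divisible. The strategy is to show that $K$ is itself $2$-divisible; combined with $2K=0$ this would force $K=2K=0$. Given $x=\sum l_{a_i,b_i,c_i}\in K$ with $\sum a_i\otimes\{b_i,c_i\}+b_i\otimes\{a_i,c_i\}=0$ in $\fff\otimes K_2^M(R)$, the plan is to choose $\alpha_i\in\fff$ with $a_i=\alpha_i^2$ and apply Lemma (i) to $\diag(a_i,1)=\diag(\alpha_i,1)^2$, which yields $l_{a_i,b_i,c_i}=2\,l_{\alpha_i,b_i,c_i}$. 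Setting $y=\sum l_{\alpha_i,b_i,c_i}\in H_3(\GL_2)$ one obtains $x=2y$.

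The delicate point is that $y$ need not itself lie in $K$: the candidate constraint element $S=\sum\alpha_i\otimes\{b_i,c_i\}+b_i\otimes\{\alpha_i,c_i\}$ only satisfies $2S=0$ (it is half the original constraint), rather than $S=0$. To bridge this, I would invoke the universal identity $2l_{\alpha,b,c}=-\Phi(\alpha\otimes\{b,c\}+b\otimes\{\alpha,c\})$ established inside the proof of Theorem \ref{kernel}, which gives $x=-\Phi(S)$. Since $\fff\otimes K_2^M(R)$ is $2$-divisible, $S=2T$ for some $T$, and by repeatedly extracting square roots in $\fff$ one exhibits $x$ as lying in $2^n H_3(\GL_2)$ for every $n\geq 1$. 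The main technical obstacle is to promote this infinite $2$-divisibility together with $2x=0$ to $x=0$; I expect to handle it by showing that the image of $\Phi$, namely $\fff\cup H_2(\GL_1)\subseteq H_3(\GL_2)$, carries no nontrivial $2$-torsion when $\fff=\fff^2$ — equivalently, that this image is uniquely $2$-divisible — either by a direct symbol manipulation with the $\mathbf{c}(\cdot)$-classes or by exhibiting it as a quotient of a $\z[1/2]$-module. Granting this, $x\in 2H_3(\GL_2)$ and $2x=0$ together with triviality of $2$-torsion in the image of $\Phi$ give $x=0$, completing the proof.
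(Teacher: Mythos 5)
Part (i) of your proposal is correct and is exactly the paper's (one-line) argument: the kernel is $2$-torsion by Theorem \ref{kernel}, hence dies after inverting $2$.

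Part (ii) has a genuine gap, and it sits precisely where you flag it. Your argument reduces to the claim that the image of $\Phi$ in $H_3(\GL_2)$ carries no nontrivial $2$-torsion when $\fff=\fff^2$, and you offer only a hope of proving it (``I expect to handle it by\dots''). That claim is a statement about torsion in $H_3(\GL_2)$ and is essentially as hard as the corollary itself; the infinite $2$-divisibility of $x$ that you extract beforehand buys you nothing on its own, since a $2$-torsion element can perfectly well lie in $\bigcap_n 2^nH_3(\GL_2)$. The missing ingredient is not homological at all: it is the Bass--Tate result \cite[Proposition 1.2]{bass-tate1973} that when $\fff=\fff^2$ the group $K_2^M(R)$ is \emph{uniquely} $2$-divisible, hence $\fff\otimes_\z K_2^M(R)$ is a $\z[1/2]$-module and in particular has no $2$-torsion. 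With that in hand your own setup closes immediately: you already established $2S=0$ for $S=\sum\alpha_i\otimes\{b_i,c_i\}+b_i\otimes\{\alpha_i,c_i\}$ and the identity $x=-\Phi(S)$, so absence of $2$-torsion gives $S=0$ and hence $x=-\Phi(0)=0$; there is no need for the element $T$ with $S=2T$ or for iterated square roots. This is exactly the paper's argument, except that the paper halves the symbol by writing $c={c'}^2$ (so that $l_{a,b,c}=2l_{a,b,c'}$ and the original constraint equals twice the constraint for $(a,b,c')$) rather than $a=\alpha^2$ as you do; the choice of which variable to take a square root of is immaterial. In short: replace your unproven torsion-freeness claim about $\im(\Phi)\subseteq H_3(\GL_2)$ by the unique $2$-divisibility of $K_2^M(R)$, and part (ii) is complete.
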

\begin{proof}
The part (i) immediately follows from Theorem \ref{kernel}.
Let $\fff=\fff^2$. By Theorem \ref{kernel}, we may assume that
$x \in \ker(\inc_\ast)$ is of the form
$l_{a,b,c} \in H_3(\GL_2)$ such that $a\otimes \{b, c\}+b\otimes \{a, c\}=0$. Let
$c={c'}^2$ for some $c' \in \fff$. Then
$l_{a,b,c}=2l_{a,b,c'}$ and $2\Big(a\otimes \{b, c'\}+b\otimes \{a, c'\}\Big)=0$.
Since $K_2^M(R)$ is uniquely $2$-divisible \cite[Proposition 1.2]{bass-tate1973},
$\fff \otimes K_2^M(R)$ is uniquely
$2$-divisible too. Hence $a\otimes \{b, c'\}+b\otimes \{a, c'\}=0$. Now from
Theorem \ref{kernel}, it follows that $2l_{a,b,c'}=0$.
Therefore $l_{a,b,c}=0$ and hence
$\inc_\ast:H_3(\GL_2) \arr H_3(\GL_3)$ is injective.
\end{proof}

\begin{exa}
Let $R=\R$.
It is well-know that $K_2^M(\R)\simeq \lan \{-1,-1\}\ran \oplus V$,
where $V$ is uniquely divisible and is generated by elements $\{a, b\}$
with $a,b > 0$. Let $l_{a,b,c} \in H_3(\GL_2(\R))$ such that
$a\otimes \{b, c\}+b\otimes \{a, c\}=0$.
If $a>0$, then
$a\otimes \{b,c\}= a \otimes \{-b,c\}=a\otimes \{b,-c\}=a\otimes \{-b,-c\}$,
so we may assume that $b,c>0$. Now with an argument as in the proof of  the previous
corollary, one sees that $l_{a,b,c}=0$. A similar argument works if $b>0$ or if $c>0$.
If $a, b, c <0$, then one can easily reduce the problem to the case
that $a=b=c=-1$, and it is trivial to see that $l_{-1,-1,-1}=0$.
Therefore $\inc_\ast:H_3(\GL_2(\R)) \arr H_3(\GL_3(\R))$ is injective.
\end{exa}

\begin{rem}
Consider the following  chain of maps
\[
\fff^{\otimes 3} \otimes K_0^M(R) \!\overset{\delta_0^{(3)}}{\larr}
\!\fff^{\otimes 2} \otimes K_1^M(R)\! \overset{\delta_1^{(3)}}{\larr}
\!\fff \otimes K_2^M(R)\! \overset{\delta_2^{(3)}}{\larr} \!K_3^M(R)\! \arr\! 0,
\]
where
\[
\begin{array}{l}
\vspace{1.5mm}
\delta_2^{(3)}: a \otimes \{b,c\} \mt \{a,b,c\}\\
\vspace{1.5mm}
\delta_1^{(3)}: a \otimes b \otimes \{c\} \mt a \otimes \{b,c\}+b \otimes \{a,c\}\\
\vspace{1.5mm}
\delta_0^{(3)}: a \otimes b \otimes c \mt
b \otimes c \otimes \{a\}+ a \otimes c \otimes \{b\}+ a \otimes b \otimes \{c\}.
\end{array}
\]
It is easy to see that this is, in fact, a chain complex.
It is not difficult to see that $\ker(\delta_2^{(3)})=\im(\delta_1^{(3)})$
(see the proof of Theorem 3.2 in \cite{hutchinson-tao2009}).
Under the composition
\[
\fff^{\otimes 3} \larr \fff \otimes H_2(\fff) \larr H_3(\GL_2),
\]
defined by
\[
 a \otimes b \otimes c \mt a \otimes {\rm \bf{c}} (b,c) \mt
 {\rm \bf{c}} (\diag(a,1), \diag(1,b),\diag(1,c)),
\]
one can see that $\im(\delta_0^{(3)})$ maps to zero.
Thus we obtain a surjective map
\[
\ker(\delta_1^{(3)})/\im(\delta_0^{(3)}) \larr
\ker\Big(H_3(\GL_2) \arr H_3(\GL_3)\Big),
\]
\[
{\begin{array}{c} \!\! \sum \!\!\end{array}}
 a \otimes b \otimes c + \im(\delta_0^{(3)})\mt
 {\begin{array}{c} \!\! \sum \!\!\end{array}} l_{a,b,c}.
\]

\end{rem}


\begin{lem}\label{inj-SL2}
Let $R$ be a ring with many units.
\par { \rm (i)} We have the exact sequence
\[
0\! \larr \! H_3(\SL_2, \z[1/2])_\fff \!\larr \!H_3(\SL, \z[1/2])
\!\larr \!K_3^M(R)_{\z[1/2]}\! \larr \!0.
\]
\par { \rm (ii)} If $\fff=\fff^2=\{a^2|a \in \fff\}$, then we have the exact sequence
\[
0 \larr  H_3(\SL_2) \larr H_3(\SL)
\larr K_3^M(R) \larr 0.
\]
\end{lem}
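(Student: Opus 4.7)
The plan is to combine three ingredients: Suslin's exact sequence relating $H_3(\GL_n)$ and $K_3^M(R)$ (generalised to rings with many units in \cite{guin1989}), the injectivity from Corollary \ref{injectivity}, and a translation between $\GL$ and $\SL$ via the split determinant extension $1\to\SL_n\to\GL_n\to\fff\to 1$.

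First I would combine the exact sequence
\[
H_3(\GL_2)\larr H_3(\GL)\larr K_3^M(R)\larr 0
\]
(from \cite{guin1989}, using the stability isomorphism $H_3(\GL_n)\simeq H_3(\GL)$ for $n\ge 3$) with Corollary \ref{injectivity}(i) to obtain, after tensoring with $\z[1/2]$, the short exact sequence
\[
0\larr H_3(\GL_2,\z[1/2])\larr H_3(\GL,\z[1/2])\larr K_3^M(R)_{\z[1/2]}\larr 0;
\]
the integral version holds under the hypothesis of part (ii) by Corollary \ref{injectivity}(ii).

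Next I would analyse the Hochschild-Serre spectral sequence of $\GL_n\simeq\SL_n\rtimes\fff$, split via $a\mt\diag(a,1,\dots,1)$. Conjugation by $\diag(a,1,\dots,1)$ is inner in $\GL_n$, so the natural map $H_3(\SL_n)\to H_3(\GL_n)$ is $\fff$-invariant and factors through $H_3(\SL_n)_\fff$. Stably, writing $\diag(a,1,\dots)=\diag(a,a^{-1},1,\dots)\cdot\diag(1,a,1,\dots)$ and iterating, the $\fff$-action on $H_*(\SL)$ becomes trivial, so $H_3(\SL)_\fff=H_3(\SL)$. After inverting $2$ the spectral sequence degenerates, giving compatible decompositions of $H_3(\GL_2,\z[1/2])$ and $H_3(\GL,\z[1/2])$ whose non-top summands involve $H_{\ge 1}(\fff)$ paired with $H_{\le 2}(\SL_n)$; homological stability for $H_{\le 2}(\SL_n)$ identifies these auxiliary summands on the two sides and shows that $H_3(\GL_2)\to H_3(\GL)$ is the identity on them. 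Quotienting in the short exact sequence above then yields
\[
0\larr H_3(\SL_2,\z[1/2])_\fff\larr H_3(\SL,\z[1/2])\larr K_3^M(R)_{\z[1/2]}\larr 0,
\]
which is (i). Part (ii) follows identically with integral coefficients, using Corollary \ref{injectivity}(ii) in place of (i) and the fact that $\fff=\fff^2$ forces the required $2$-torsion arguments to collapse.

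The hard part will be establishing the degeneration of the Hochschild-Serre spectral sequence after inverting $2$ (resting on the triviality of the stable $\fff$-action on $H_*(\SL)$ above) and matching the auxiliary $\fff$-summands in $H_3(\GL_2)$ and $H_3(\GL)$ under the stabilisation map. Both rely on homological stability \cite{guin1989} in low degrees for $\SL$ together with the explicit behaviour of cup products with elements of $\fff$ recorded in Sections 2 and 3 of the paper.
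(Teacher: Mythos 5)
Your overall strategy --- combine the Guin--Suslin sequence $H_3(\GL_2)\to H_3(\GL_3)\to K_3^M(R)\to 0$ with Corollary \ref{injectivity}, then descend from $\GL$ to $\SL$ through the split extension $1\to\SL_n\to\GL_n\to\fff\to 1$ --- is exactly the route the paper intends: its ``proof'' is a pointer to Theorem 6.1 and Corollary 6.2 of \cite{mirzaii-2008}, which argue in just this way. So the architecture of your proposal is right.

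However, one step of your sketch is wrong as stated, and it is the step carrying the real content: you claim that ``homological stability for $H_{\le 2}(\SL_n)$ identifies these auxiliary summands.'' Stability in degree $2$ holds for $\GL_n$ (Guin), but fails for $\SL_n$ at $n=2$: the map $H_2(\SL_2(R))\to H_2(\SL_3(R))\simeq K_2(R)$ is surjective but not injective in general, and its kernel (for a field, essentially $I^3(F)$ inside the Witt ring; an infinite cyclic group for $R=\R$) does not vanish after inverting $2$. Consequently the term $H_1(\fff,H_2(\SL_2))$ cannot be matched with $\fff\otimes K_2(R)$ by an appeal to stability. What rescues the argument is precisely the nontrivial conjugation action of $\fff$ on $H_*(\SL_2)$, which you treat as a technicality that ``becomes trivial'' stably: one needs that $H_2(\SL_2)_{\fff}\to K_2(R)$ is an isomorphism up to $2$-torsion, and the comparison of the rows with $p\ge 1$ must be carried out with these twisted coefficients. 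This unstable action is also the reason the coinvariants $H_3(\SL_2)_{\fff}$, rather than $H_3(\SL_2)$ itself, appear in part (i). For part (ii) you should likewise make explicit that $\fff=\fff^2$ trivializes the $\fff$-action on $H_*(\SL_2)$ (write $\diag(a,1)=\diag(b,b)\cdot\diag(b,b^{-1})$ with $a=b^2$, a central element times an element of $\SL_2$), which is what removes the coinvariants integrally; ``the $2$-torsion arguments collapse'' is not by itself a reason.
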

\begin{proof}
The proof is similar to the proof of Theorem 6.1 and Corollary 6.2
in \cite{mirzaii-2008}.
\end{proof}

\begin{thm}\label{ind}
Let $R$ be a ring with many units.
\par { \rm (i)} We have the isomorphism
\[
K_3(R)^\ind \otimes \z[1/2]\simeq H_3(\SL_2, \z[1/2])_\fff.
\]
\par { \rm (ii)} If $\fff=\fff^2=\{a^2|a \in \fff\}$, then
\[
K_3(R)^\ind\simeq  H_3(\SL_2).
\]
\end{thm}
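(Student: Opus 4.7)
The plan is to combine the exact sequences of Lemma~\ref{inj-SL2} with the natural comparison map between $K_3(R)$ and $H_3(\SL(R))$. First I would invoke Suslin's theorem that the Hurewicz map $h_3':K_3(R) \arr H_3(E(R),\z)$ is surjective with $2$-torsion kernel \cite{suslin1991}; combined with the agreement of the homology of $E(R)$ and $\SL(R)$ for rings with many units (at least after inverting $2$, cf.\ \cite{guin1989}), this yields an isomorphism $K_3(R)\otimes\z[1/2]\simeq H_3(\SL(R),\z[1/2])$, and under the hypothesis $\fff=\fff^2$ of part~(ii) a corresponding integral isomorphism, since $\fff=\fff^2$ forces $-1$ to be a square and thereby kills the usual $2$-torsion obstructions coming from Milnor symbols.

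Second, I would check that the composition
\[
K_3^M(R)\larr K_3(R)\larr H_3(\SL(R),\z)\larr K_3^M(R),
\]
in which the outer maps are the natural Milnor-to-Quillen map and the surjection from Lemma~\ref{inj-SL2}, is the identity up to inverting~$2$. This is a hands-on computation using the cup-product description that sends a symbol $\{a,b,c\}$ to the class of ${\rm \bf{c}}(\diag(a,1,1),\diag(1,b,1),\diag(1,1,c))$, together with the explicit formulas underlying the proof of Lemma~\ref{inj-SL2}.

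Combining these, the exact sequence of Lemma~\ref{inj-SL2}(i) is split by $K_3^M(R)_{\z[1/2]} \arr K_3(R)\otimes\z[1/2]\simeq H_3(\SL,\z[1/2])$, and the complementary summand $H_3(\SL_2,\z[1/2])_\fff$ is identified with the quotient $K_3(R)^\ind\otimes\z[1/2]$, proving~(i). For~(ii), the assumption $\fff=\fff^2$ forces the $\fff$-action on $H_3(\SL_2)$ to be trivial: every $a\in\fff$ can be written $a=b^2$, and $\diag(a,1)=\diag(b,b^{-1})\cdot\diag(b,b)$ acts trivially by conjugation (the first factor lies in $\SL_2$, so its conjugation on $H_3(\SL_2)$ is inner, while the second factor is central in $\GL_2$). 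Hence the $\fff$-coinvariants agree with $H_3(\SL_2)$, and Lemma~\ref{inj-SL2}(ii) combined with the integral isomorphism $K_3(R)\simeq H_3(\SL(R),\z)$ yields~(ii). The main obstacle is the explicit computation in the second step, namely verifying that the round trip $K_3^M\arr K_3\arr H_3(\SL)\arr K_3^M$ is the identity up to a power of~$2$, which requires careful bookkeeping of cup products in group homology.
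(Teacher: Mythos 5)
Your proposal matches the proof the paper actually gives: the paper defers entirely to the argument of Theorem 6.4 in \cite{mirzaii-2008}, and that argument is precisely your combination of Lemma \ref{inj-SL2} with the surjectivity (with $2$-torsion kernel) of $K_3(R)\to H_3(E(R))$ and the fact that the round trip $K_3^M(R)\to K_3(R)\to H_3(\SL)\to K_3^M(R)$ is multiplication by $\pm 2$, hence an isomorphism after inverting $2$, which splits off $K_3^M$ and identifies the complementary summand $H_3(\SL_2,\z[1/2])_\fff$ with $K_3(R)^\ind\otimes\z[1/2]$. Your handling of (ii) --- unique $2$-divisibility of $K_3^M(R)$ when $\fff=\fff^2$ and triviality of the $\fff$-action on $H_3(\SL_2)$ via $\diag(b^2,1)=\diag(b,b^{-1})\diag(b,b)$ --- likewise follows the cited argument, so the proposal is correct and takes essentially the same route.
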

\begin{proof}
The proof is similar to the proof of Theorem 6.4 in \cite{mirzaii-2008}.
\end{proof}

\begin{rem}
Previously Lemma \ref{inj-SL2} and Theorem \ref{ind} were only known for
infinite fields \cite[Corollary 6.2, Proposition 6.4]{mirzaii-2008}.
\end{rem}

\subsection*{Acknowledgments}
Part of this work has done during my visit to ICTP on August 2011.
I would like to thank them for their support and hospitality.


\bigskip
\address{{\footnotesize

Department of Mathematics,

Institute for Advanced Studies in Basic Sciences,

P. O. Box. 45195-1159, Zanjan, Iran.

email:\ bmirzaii@iasbs.ac.ir

}}

\begin{thebibliography}{99}

\bibitem{bass-tate1973} Bass, H., Tate, J.
The Milnor ring of a global field. Lecture Notes in Math.,
Vol. {\bf 342}, 1973, 349--446.

\bibitem{brown1994} Brown, K. S. Cohomology of Groups.  Graduate
Texts in Mathematics, 87. Springer-Verlag, New York, 1994.


\bibitem{elbaz1998} Elbaz-Vincent, P. The indecomposable $K\sb 3$
of rings and homology of ${\rm SL}\sb 2$. J. Pure Appl. Algebra {\bf 132}
(1998), no. 1, 27--71.

\bibitem{guin1989}
Guin, D. Homologie du groupe lin\'eire et $K$-th\'erie de
Milnor des anneaux. J. Algebra {\bf 123} (1989), no. 1, 27--59.


\bibitem{hutchinson-tao2009}
Hutchinson, K., Tao, L. The third homology of the special linear group of a field.
J. Pure Appl. Algebra {\bf 213} (2009), no. 9, 1665--1680.




\bibitem{maclane1963}  Mac Lane, S. Homology.
New York; Springer-Verlag, Berlin-G\"ottingen-Heidelberg, 1963.


\bibitem{mirzaii2008}
Mirzaii, B. Homology of $\GL_n$: injectivity conjecture for $\GL_4$.
Math. Ann. {\bf 304} (2008), no.1, 159--184.

\bibitem{mirzaii-2008}
Mirzaii, B. Third homology of general linear groups. J. Algebra
{\bf 320} (2008), no. 5, 1851--1877.






\bibitem{sah1989} Sah, C. Homology of classical Lie groups made discrete.
III. J. Pure Appl. Algebra {\bf 56} (1989), no. 3, 269--312.

\bibitem{srinivas1996}
Srinivas, V. Algebraic $K$-Theory. Second edition. Progress
in Mathematics, 90. Birkh\"auser Boston, 1996.

\bibitem{suslin1985} Suslin, A. A. Homology of ${\rm GL}\sb{n}$,
characteristic classes and Milnor $K$-theory. Proc. Steklov Inst. Math.
{\bf 3} (1985), 207--225.

\bibitem{suslin1991} Suslin, A. A. $K\sb 3$ of a field and the Bloch group.
Proc. Steklov Inst. Math. {\bf 183} (1991), no. 4, 217--239.

\bibitem{weibel1994} Weibel, C. A. An Introduction to Homological Algebra.
Cambridge Studies in Advanced Mathematics, 38. Cambridge University
Press, Cambridge, 1994.

\end{thebibliography}
\end{document}